\definecolor{webgreen}{rgb}{0,.5,0}
\definecolor{webbrown}{rgb}{.6,0,0}
\begin{document}

\theoremstyle{plain}
\newtheorem{theorem}{Theorem}
\newtheorem{corollary}[theorem]{Corollary}
\newtheorem{lemma}[theorem]{Lemma}
\newtheorem{proposition}[theorem]{Proposition}

\theoremstyle{definition}
\newtheorem{definition}[theorem]{Definition}
\newtheorem{example}[theorem]{Example}
\newtheorem{conjecture}[theorem]{Conjecture}
\newtheorem{observation}[theorem]{Observation}

\theoremstyle{remark}
\newtheorem{remark}[theorem]{Remark}

\begin{center}
\vskip 1cm{\LARGE\bf When Sets Are Not Sum-Dominant\\
\vskip 1cm}
\large
H\`ung Vi\d{\^e}t Chu\\
Department of Mathematics\\ Washington and Lee University \\
Lexington, VA 24450\\
USA \\
\href{mailto:chuh19@mail.wlu.edu}{\tt chuh19@mail.wlu.edu} \\
\end{center}
\vskip .2 in

\begin{abstract} 
Given a set $A$ of nonnegative integers, define the sum set
$$A+A = \{a_i+a_j\mid a_i,a_j\in A\}$$ and the difference set 
$$A-A = \{a_i-a_j\mid a_i,a_j\in A\}.$$ The set $A$ is said to be sum-dominant
if $|A+A|>|A-A|$. In answering a question by Nathanson, Hegarty
used a clever algorithm to find that the smallest cardinality of a
sum-dominant set is $8$. Since then, Nathanson has been asking for a
human-understandable proof of the result. We offer a computer-free proof that a set
of cardinality less than $6$ is not sum-dominant. Furthermore, we prove
that the introduction of at most two numbers into a set of numbers
in an arithmetic progression does not give a sum-dominant set. This
theorem eases several of our proofs and may shed light on future work
exploring why a set of cardinality $6$ is not sum-dominant. Finally,
we prove that if a set contains a certain number of integers from a
specific sequence, then adding a few arbitrary numbers into the set
does not give a sum-dominant set.  \end{abstract} 

\section{Introduction}

\subsection{Literature review} Given a set $A\subseteq \mathbb{N}$, define
the sum set $A+A = \{a_i+a_j \mid a_i,a_j\in A\}$ and the difference set $A-A
= \{a_i-a_j \mid a_i,a_j\in A\}$. The set $A$ is said to be
\begin{itemize}
\item {\it sum-dominant}, if $|A+A|>|A-A|$;
\item {\it balanced}, if $|A+A| = |A-A|$; and
\item {\it difference-dominant}, if $|A+A|<|A-A|$.
\end{itemize}
As addition is commutative and subtraction is not, it was
natural to conjecture that sum-dominant sets are rare. Since Nathanson's
review of the subject in 2006 \cite{Na2}, research on sum-dominant sets
has made considerable progress: see \cite{FP, Ma, Na2, Ru1, Ru2, Ru3} for
history and overview, \cite{He,MOS,MS,Na3,Zh1} for explicit constructions
, \cite{CLMS2, MO, Zh3} for positive lower bound for the percentage of
sum-dominant sets, \cite{ILMZ,MPR} for generalized sum-dominant sets,
and \cite{AMMS,CLMS1,CNMXZ,MV,Zh2} for extensions to other settings.

However, much less work has been done on how to determine whether or not a given set is sum-dominant. Only recently, Mathur and Wong \cite{MAW} gave an algorithm for checking if a set is sum-dominant. The algorithm computes and compares all pairs of possible sums and differences among numbers of the set. This paper instead focuses on certain types of not-sum-dominant sets that we can tell without the assistance of computers. 

Nathanson \cite{Na2} asked \textit{What is the smallest cardinality of a sum-dominant set?}. Hegarty \cite{He} used a clever algorithm to find $A_1 = \{0,2,3,4,7,11,12,14\}$ as the smallest sum-dominant set. Furthermore, $A_1$ is the unique sum-dominant set of cardinality $8$, up to affine transformations. However, a human-understandable proof of the result has not been produced because of the complexity lurking behind the interaction of numbers in addition and subtraction. Nathanson \cite{Na1, Na4} asked for a human-understandable proof of the smallest cardinality of a sum-dominant set. This paper proves that a set of cardinality less than $6$ is not sum-dominant without the use of computers. 
\subsection{Notation}
We introduce some notation. 
\begin{itemize}
\item Let $n,a,b\in \mathbb{N}$. Define $I_n = \{0,1,\ldots,n-1\}$ and $[a,b] = \{a,a+1,\ldots,a+\ell \mid \ell\in \mathbb{N}, \ell\le b-a\}$. So, we can write $I_n=[0,n-1]$. Define the center of $I_n$ to be $(n-1)/2$, the point that is equidistant from the two endpoints. 

\item For $(a_i)_{i=1}^\ell$ and a set $A$, we write $(a_i)_{i=1}^\ell\rightarrow A$ to mean the introduction of $\ell$ numbers $(a_i)_{i=1}^\ell$ into the set $A$ to form $A\cup\{a_i \mid  1\le i\le \ell\}$.

\item Given a set $A$ and a number $x$,
we write $x\pm A$ to mean the set $\{x\pm a_i\mid a_i\in A\}$.

\end{itemize}
\subsection{Main results}

The following is our first result. 

\begin{theorem}\label{notsum-dominant}
A set $A$ with $|A|< 6$ is not sum-dominant. 
\end{theorem}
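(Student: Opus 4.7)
The plan is to handle each cardinality separately, invoking the paper's theorem that introducing at most two numbers into an arithmetic progression never produces a sum-dominant set (hereafter the ``AP+2 theorem'') wherever it applies. For $|A|\le 2$ the assertion is immediate since $|A+A|=|A-A|$. For $|A|=3$ and $|A|=4$, any two elements of $A$ trivially form a length-two arithmetic progression, so $A$ is obtained from such a 2-AP by introducing at most two additional numbers, and AP+2 applies directly. (If the AP+2 theorem is proved only for progressions of length at least three, one can dispose of $|A|=3$ by hand and split $|A|=4$ on whether $A$ contains a 3-term AP, with the 3-AP-free subcase handled by the same structural argument I sketch below for $|A|=5$.)

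The main case is $|A|=5$. If $A$ contains a three-term arithmetic progression $P$, then $A=P\cup\{x,y\}$ and AP+2 finishes the job. Otherwise $A$ is 3-AP-free, and I would argue directly. Write $A=\{a_1<a_2<a_3<a_4<a_5\}$. The 3-AP-freeness forces every sum coincidence $a_i+a_j=a_k+a_\ell$ to come from four distinct indices: a diagonal-cross collision $a_i+a_j=2a_k$ would produce the 3-AP $\{a_i,a_k,a_j\}$, and a cross-cross collision sharing an index would force two elements of $A$ to coincide. Moreover, among the three pairings of a 4-subset $\{a<b<c<d\}$ only the nested one $a+d=b+c$ can give equal sums (the other two pairings satisfy $a+b<c+d$ and $a+c<b+d$). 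Similarly every positive difference coincidence has four distinct indices, and a short calculation shows that each nested sum coincidence $a_i+a_\ell=a_j+a_k$ (with $a_i<a_j<a_k<a_\ell$) is equivalent to exactly two positive difference coincidences, namely $a_j-a_i=a_\ell-a_k$ and $a_k-a_i=a_\ell-a_j$, and every positive difference coincidence arises this way. Writing $SC$ and $DC$ for the counts of sum and positive-difference coincidences, this yields $DC=2\,SC$, $|A+A|=15-SC$, and $|A-A|=21-2\,DC=21-4\,SC$, hence $|A-A|-|A+A|=6-3\,SC$.

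It therefore suffices to prove $SC\le 2$. The five 4-subsets $A\setminus\{a_i\}$ yield five candidate nested equations $E_1,\ldots,E_5$, and the task is to bound the number that hold simultaneously. I expect this finite check to be the main obstacle, and I would carry it out by enumerating the $\binom{5}{2}=10$ unordered pairs $\{E_i,E_j\}$: in every case, subtracting one equation from the other yields either an immediate contradiction ($a_p=a_q$ for some $p\ne q$) or a three-term AP among the $a_i$'s, both ruled out by our hypotheses. This actually gives the stronger bound $SC\le 1$, so $|A-A|-|A+A|\ge 3>0$, completing the proof of Theorem~\ref{notsum-dominant}.
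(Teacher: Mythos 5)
Your proposal is correct, but it reaches the $|A|=5$ case by a genuinely different route than the paper. The paper's own proof is self-contained: it uses Proposition~\ref{bounds} and Observation~\ref{losesum} to force at least $5$ (resp.\ $3$) coincidences among positive differences, then analyzes the gaps $d_i=a_{i+1}-a_i$ row by row until the set is pinned down to an arithmetic-progression case or to the explicit sets $\{0,1,3,4,5\}$ and $\{0,1,2,4,5\}$, which are checked directly. You instead lean on Theorem~\ref{two} to dispose of every set containing a $3$-term progression (exactly the simplification the paper itself suggests in its Remark after Theorem~\ref{two}), and then handle the $3$-AP-free case by a clean structural count: in a $3$-AP-free $5$-set every sum or difference class has size at most $2$, each coincidence lives on four distinct indices and must be the nested pairing of its $4$-subset, each such nested equation accounts for exactly two difference coincidences, so $|A-A|-|A+A|=6-3\,SC$; and the pairwise check of the five nested equations $E_1,\dots,E_5$ does give either $a_p=a_q$ or a $3$-term AP in every one of the ten cases, so $SC\le 1$. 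I verified all ten eliminations and the counting identities; they are sound. What your route buys is a conceptually cleaner and quantitatively stronger conclusion ($|A-A|\ge|A+A|+3$ in the $3$-AP-free case) at the price of importing the full strength of Theorem~\ref{two}, whose proof is far more laborious than the paper's direct argument for $|A|=5$. One point to make explicit rather than hedge on: the paper's proof of Lemma~\ref{lemma1} (hence of Theorem~\ref{two}) cites the cardinality-$3$ instance of Theorem~\ref{notsum-dominant}, so you must establish $|A|\le 3$ independently (e.g., via Corollary~\ref{arithnotMSTD}) before invoking Theorem~\ref{two} for $|A|\in\{4,5\}$; with that ordering there is no circularity.
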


\noindent Though our proof is concise, we are unable to prove the cases of cardinality $6$ and $7$ since the level of complexity grows quite quickly. Our main technique is to argue for a lower bound for the number of equal pairs of positive differences from $A-A$, which confines set $A$ to certain structures. 

Our next result is about the relationship between sum-dominant sets and arithmetic progressions. Since numbers from an arithmetic progression form a balanced set, it is convenient to introduce new numbers into the set (in a clever way) and produce a sum-dominant set. Indeed, the construction of sum-dominant sets using arithmetic progressions was started by Nathanson \cite{Na3} and Hegarty \cite{He}. However, little is known about the smallest number of integers needed to turn an arithmetic progression into a sum-dominant set. We prove that the introduction of at most two numbers into a set of numbers in an arithmetic progression does not give a sum-dominant set. 

\begin{theorem} \label{two}
Let $A$ be a set of numbers in an arithmetic progression and $m_1 ,m_2\in\mathbb{N}$. Then $A\cup\{m_1, m_2\}$ is not sum-dominant. 
\end{theorem}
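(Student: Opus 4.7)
The plan is to normalize the problem, exploit a structural symmetry to reduce the theorem to a concrete inequality on new sums versus new differences, and then verify that inequality by a case analysis.

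First, I would invoke the affine invariance of sum-dominance to rescale $A$ to common difference $1$ and translate so $A$ is symmetric about the origin: $A = \{-c, -c+1, \ldots, c\}$ with $c = (n-1)/2$. The payoff is $-A = A$, which forces $A+A = A-A =: I$ (an interval of $2n-1$ consecutive points). Writing $N^{\pm} := |(B \pm B) \setminus I|$ for the number of new sums and new differences beyond what $A$ already contributes, the goal $|B+B| \leq |B-B|$ reduces to showing $N^+ \leq N^-$.

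The structural key is that once $A = -A$, we have $m_k - A = A + m_k$ and $A - m_k = -(A + m_k)$, so each translate of $A$ by $m_k$ appears twice in $B - B$ (as $A + m_k$ and as its reflection $-(A+m_k)$) but only once in $B + B$. Setting $U_k := A + m_k$, $S := (U_1 \cup U_2) \setminus I$, and $\gamma := |S \cap -S|$, a short calculation yields
\[
N^+ = |S| + \alpha, \qquad N^- = 2|S| - \gamma + \beta,
\]
where $\alpha \in \{0,1,2,3\}$ counts how many of $\{2m_1,\, m_1+m_2,\, 2m_2\}$ lie outside $I \cup U_1 \cup U_2$, and $\beta \in \{0,2\}$ indicates whether $\pm(m_2 - m_1)$ lie outside $I \cup U_1 \cup U_2 \cup -U_1 \cup -U_2$. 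The factor of $2$ in $N^-$ comes from the observation that each $U_k$ is an interval of length $n-1$, strictly shorter than $I$, so its protrusion $U_k \setminus I$ lies entirely on one side of $I$ and its reflection on the other. The theorem thereby reduces to verifying $|S| + \beta \geq \alpha + \gamma$.

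I would complete the proof by a case analysis on where $U_1, U_2$ protrude from $I$. If they protrude on the same side, then $\gamma = 0$ automatically, and one checks $|S| \geq \alpha$ directly: each new element of $\{2m_1, m_1+m_2, 2m_2\}$ forces the corresponding $m_k$ to sit far enough from $0$ that its translate adds a compensating point to $S$. If they protrude on opposite sides, then $m_2 - m_1 > n-1$ (in particular $\pm(m_2-m_1) \notin I$), $\gamma$ can be computed explicitly in terms of $m_1 + m_2$, and the combined contribution of $|S|$ from both $U_1, U_2$ suffices to absorb $\alpha + \gamma$ even in the pessimistic scenario $\beta = 0$. Degenerate sub-cases where some $U_k \subseteq I$ (i.e.\ $|m_k| \leq c$) reduce to the single-insertion problem for the other index and can be dispatched separately.

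I anticipate the main obstacle to be the opposite-side case, where $\alpha, \gamma$ may both be nonzero and the cross sums $2m_k, m_1+m_2$ or cross differences $\pm(m_2-m_1)$ can land inside the translate sets $\pm U_k$, eroding the expected surpluses. In these boundary configurations one must carefully identify, for each type of coincidence, a geometric constraint that simultaneously enlarges $|S|$ or shrinks $\gamma$ by enough to preserve the inequality; this bookkeeping is the combinatorial heart of the argument.
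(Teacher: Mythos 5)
Your reduction is genuinely different from the paper's and is, as far as it goes, correct and rather elegant. The paper normalizes the progression to $I_n=\{0,\ldots,n-1\}$ and proves the theorem as two separate lemmas (Lemma~\ref{lemma1} for $x\pm y\notin\mathbb{Z}$, Lemma~\ref{lemma2} for $x\pm y\in\mathbb{Z}$), each handled by a position-based case analysis with ad hoc counts of new sums and differences. You instead center $A$ at the origin so that $A=-A$ and $A+A=A-A$, and your identities $N^+=|S|+\alpha$ and $N^-=2|S|-\gamma+\beta$ are exact and correct (each translate $U_k=A+m_k$ enters the difference set twice, as $U_k$ and $-U_k$, but the sum set only once). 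This packages the entire theorem into the single inequality $\alpha+\gamma\le |S|+\beta$, which is a cleaner target than the paper's scattered case-by-case bounds and makes the source of the difference set's advantage transparent.

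The gap is that the verification of $\alpha+\gamma\le|S|+\beta$ --- which is where all the actual work of the theorem lives --- is only sketched, and at least one of the structural claims you lean on is false in exactly the configurations that are hardest. After the affine normalization the images of $m_1,m_2$ need not lie on the lattice of $A$, so $U_k\cap I$ can be empty even when $U_k$ sits inside the span of $I$; then $U_k\setminus I=U_k$ does \emph{not} ``lie entirely on one side of $I$,'' and the claim that same-side protrusion forces $\gamma=0$ fails. Concretely, with $A=\{-1,0,1\}$ and (normalized) $m_1=1/2$ one gets $U_1\setminus I=\{-1/2,1/2,3/2\}$ and $-U_1=\{-3/2,-1/2,1/2\}$, contributing $2$ to $\gamma$. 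These half-lattice cases are precisely the ones the paper must isolate (Subcase I.1 of Lemma~\ref{add1toarith}, Case II of Parts II and III of Lemma~\ref{lemma2}), and they are where $N^+=N^-$ can hold with equality, so there is no slack: the case division must be refined to track whether $2m_k$, $m_1+m_2$, and $m_1-m_2$ lie on the lattice, not merely where $U_1,U_2$ protrude. Your framework survives this refinement --- the identities are unconditionally true --- but the ``bookkeeping'' you defer is not routine; it is essentially the same combinatorial content as the paper's Lemmas~\ref{lemma1} and~\ref{lemma2}, and until it is carried out the argument is a promising plan rather than a proof.
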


\noindent A natural question to ask is \textit{what is the minimum number of integers to add to a set formed by numbers in an arithmetic progression and have a sum-dominant set?}. Let the number be $k$. Note that $A_1 = \{3,7,11\}\cup\{0,2,4,12,14\}$. Because $3$, $7$, $11$ is an arithmetic progression, we know that $3\le k\le 5$. Though Theorem \ref{two} is easily stated, the proof requires a clever division into cases to reduce the complexity of interactions between numbers in addition and subtraction. As a necessary condition for a set to be sum-dominant, Theorem \ref{two} provides a powerful tool to eliminate cases in arguing about the smallest cardinality of sum-dominant sets. For example, for a set of cardinality $6$, if we know that at least $4$ numbers in the set form an arithmetic progression, then the set is not sum-dominant. 

Our final result is another test for being sum-dominant and extends \cite[Theorem 1]{CNMXZ}. 
\begin{theorem}\label{three}
Let $S$ be a subset of $A$, where $A = \{a_k\}_{k=1}^\infty$ is a strictly increasing sequence of non-negative numbers. If there exists a positive integer $r$ such that
\begin{enumerate}
    \item $a_k>a_{k-1}+a_{k-r}$ for all $k\ge r+1$, and 
    \item set $A$ does not contain any sum-dominant set $S$ with $|S|\le 2r+n$ for some $n\in\mathbb{N}$, and
    \item $|S| = 2r+n+\ell$ for some $\ell\in\mathbb{N}$, 
\end{enumerate}
Then
\begin{enumerate}
    \item $S$ is not sum-dominant and $|S-S|-|S+S|\ge \ell(n+1)$,
    \item Let $m\in\mathbb{N}$ and $(b_i)_{i=1}^m$ be integers. If $ m|S| + \frac{m(m+1)}{2}\le \ell(n+1)$, then $S^* = S\cup \{b_1,\ldots,b_m\}$ is not sum-dominant. 
\end{enumerate}
\end{theorem}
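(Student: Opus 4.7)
The plan is to prove part (1) by induction on $\ell$, using a super-growth inequality extracted from condition (1), and then deduce part (2) by counting the sums contributed by each auxiliary element. The main technical point is the asymmetry between new sums and new differences produced when a new top element is adjoined to a set: the former grow by at most $N$, the latter by at least $2(N-r)$, giving a net gain of $N - 2r$ per inductive step. Writing $S = \{s_1 < s_2 < \cdots < s_N\}$ with $N = 2r+n+\ell$ and $s_i = a_{p_i}$ for $p_1 < p_2 < \cdots$, one has $p_k \ge k$, so $p_N - 1 \ge p_{N-1}$ and $p_N - r \ge p_{N-r}$; since $p_N \ge N \ge r+1$, condition (1) gives $s_N > s_{N-1} + s_{N-r}$, and the same super-growth holds for every prefix of $S$ of size $\ge r+1$. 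The main obstacle I foresee is precisely this indexing step, making sure the translation from $A$-indices to $S$-indices is tight enough to produce the claimed count of new differences.

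For part (1), I induct on $\ell$. The base case $\ell = 0$ is condition (2). For the inductive step, set $S' := S \setminus \{s_N\}$; applying the theorem with parameter $\ell - 1$ to $S'$ gives $|S'-S'| - |S'+S'| \ge (\ell-1)(n+1)$, so it suffices to show $\Delta_d - \Delta_s \ge n+1$, where $\Delta_d$ (resp.\ $\Delta_s$) counts the new differences (resp.\ sums) contributed by $x := s_N$. The only candidates for new sums are the $N$ distinct values $x + s_i$, $1 \le i \le N$, so $\Delta_s \le N$. For the differences, for $i \in \{1, \ldots, N-r\}$, $s_i \le s_{N-r}$ and super-growth give
$$ x - s_i \;\ge\; x - s_{N-r} \;>\; s_{N-1} \;\ge\; s_{N-1} - s_1 \;=\; \max(S' - S'),$$
so both $x - s_i$ and $s_i - x$ lie outside $S' - S'$, yielding $\Delta_d \ge 2(N-r)$. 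Hence $\Delta_d - \Delta_s \ge 2(N-r) - N = n + \ell \ge n + 1$, and combining with the inductive bound gives $|S-S| - |S+S| \ge \ell(n+1)$.

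For part (2), insert $b_1, \ldots, b_m$ into $S$ one at a time. When a single element $b$ is added to a set $T$, the only candidates for new sums are $b + t$ for $t \in T \cup \{b\}$, so $|T+T|$ grows by at most $|T| + 1$, while $|T-T|$ cannot decrease. Summing over the $m$ insertions gives
$$|S^* + S^*| \;\le\; |S+S| + \sum_{i=1}^m (|S| + i) \;=\; |S+S| + m|S| + \tfrac{m(m+1)}{2},$$
and $|S^* - S^*| \ge |S-S|$. Combining with part (1) and the hypothesis $m|S| + \frac{m(m+1)}{2} \le \ell(n+1)$ yields $|S^* - S^*| - |S^* + S^*| \ge \ell(n+1) - \bigl(m|S| + \frac{m(m+1)}{2}\bigr) \ge 0$, so $S^*$ is not sum-dominant.
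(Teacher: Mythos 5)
Your proposal is correct and follows essentially the same route as the paper: induct on the largest element, bound the new sums by $|S'|+1$ and use the growth condition (via the index translation $p_N - r \ge p_{N-r}$) to show that the differences $s_N - s_i$ for $i \le N-r$ all exceed $\max(S'-S')$, then handle the $b_i$ one at a time for part (2). Your only (harmless) deviation is counting all $2(N-r)$ new differences for a per-step gain of $n+\ell$, where the paper counts just $2\lfloor(k+n+2)/2\rfloor$ of them to get the needed gain of $n+1$.
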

\begin{example}
For the Fibonacci numbers $\{0,1, 2, 3, 5, 8, 13,\ldots\}$, we have $r = 3$. By \cite[Corollary 2]{CNMXZ}, the Fibonacci numbers have no sum-dominant subsets, so we can pick any number $n$. Let $n = 2$ and $\ell = 5$, for example. Since $m=1$ satisfies $13m+\frac{m(m+1)}{2} \le 15$, we know that a set of $13$ Fibonacci numbers and an arbitrary integer is not sum-dominant. 
\end{example}

\begin{example}
Let $\rho>\phi = \frac{1+\sqrt{5}}{2}$, the golden ratio. The geometric sequence $1,\rho,\rho ^2,\rho ^3, \ldots$ has the property that $\rho^{k}>\rho^{k-1}+\rho^{k-2}$. By \cite[Corollary 8]{CNMXZ}, this sequence has $r=2$ and has no sum-dominant subsets, so we can pick any number $n$. Let $n=2$ and $\ell=4$, for example. Since $m=1$ satisfies $10m+\frac{m(m+1)}{2}\le 12$, we know that a set of $10$ numbers from the sequence and an arbitrary integer is not sum-dominant. 
\end{example}

Section \ref{important results} proves several important results for the proof of our main theorems. Section \ref{lessthan6},  Section \ref{twointoarith}, and Section \ref{setscontainseq} prove Theorem \ref{notsum-dominant}, Theorem \ref{two}, and Theorem \ref{three}, respectively. Section \ref{final} discusses some questions for future research. 
\section{Important results} \label{important results}
We use the definition of a symmetric set in the sense of Nathanson \cite{Na3}: a set $A$ is symmetric with respect to a number $a$ if $A = a-A$. It is easy to check that any arithmetic progression is symmetric. (The sum of the two endpoints of an arithmetic progression is the point of symmetry.) The following lemma is proved by Nathanson \cite{Na3}.
\begin{lemma}
A symmetric set is balanced. \label{symnotMSTD}
\end{lemma}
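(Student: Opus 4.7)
The plan is to exhibit an explicit bijection between $A+A$ and $A-A$ by translation. Specifically, I will show that the map $\phi : A+A \to A-A$ defined by $\phi(s) = s - a$ is a well-defined bijection; since translation by $-a$ is trivially injective, the whole argument reduces to verifying the set-theoretic identity $A+A - a = A-A$.

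To prove $A+A - a \subseteq A-A$, take any $x+y \in A+A$ with $x, y \in A$. The symmetry hypothesis $A = a - A$ gives $a-y \in A$, so $x + y - a = x - (a-y) \in A - A$. For the reverse inclusion $A - A \subseteq A+A - a$, take $x - y \in A - A$ with $x, y \in A$. Again $a - y \in A$ by symmetry, and $x - y = x + (a-y) - a$, so $x - y \in A+A - a$. This gives the desired identity and hence $|A+A| = |A-A|$.

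There is no serious obstacle here: the lemma is essentially a one-line translation argument once the definition $A = a - A$ is unpacked. The only thing to be careful about is that the identity we really need is at the level of sets, not multisets, so the bijection should be phrased as an equality of translates of sets rather than as a bijection between pairs $(x,y)$.
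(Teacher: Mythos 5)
Your argument is correct, and it is the standard translation proof (the paper itself gives no proof, deferring to Nathanson, whose argument is exactly this identity $A+A-a=A-A$ derived from $A=a-A$). Both inclusions are verified properly and the conclusion $|A+A|=|A-A|$ follows since translation preserves cardinality.
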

\begin{proposition}\label{addnottoofar}
Let $n\in \mathbb{N}$ and $x = (n-1)+k$, where $1\le k\le n-1$. Then $x\rightarrow I_n$ gives $k$ new positive differences and $k+1$ new sums. 
\end{proposition}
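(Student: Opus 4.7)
The plan is to directly enumerate the elements contributed by adjoining $x$ and then check that none of them secretly collide. First I would fix notation: the positive part of $I_n-I_n$ is $\{1,2,\ldots,n-1\}$, and $I_n+I_n=\{0,1,\ldots,2n-2\}$. Then I would list the candidate new elements arising from $x=n-1+k$. The new positive differences are of the form $x-i$ with $i\in I_n$, giving the block $\{k,k+1,\ldots,n-1+k\}$; the new sums are $x+i$ with $i\in I_n$ together with $2x$, giving $\{n-1+k,n+k,\ldots,2n-2+k\}\cup\{2n-2+2k\}$.

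Next I would intersect each candidate set with the baseline. On the difference side, the overlap $\{k,\ldots,n-1\}$ is already present in $\{1,\ldots,n-1\}$ because $1\le k\le n-1$, so the truly new positive differences are precisely $\{n,n+1,\ldots,n-1+k\}$, a set of size $k$. On the sum side, the overlap $\{n-1+k,\ldots,2n-2\}$ is already in $I_n+I_n$, leaving the contiguous block $\{2n-1,2n,\ldots,2n-2+k\}$, which contributes $k$ new sums from the $x+i$ family.

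The one point that needs a little care is the extra sum $2x=2n-2+2k$. I would check two things: that $2x>2n-2$ (so it is not in the original sum set), which follows from $k\ge 1$; and that $2x$ is not one of $\{2n-1,\ldots,2n-2+k\}$, which follows from $2k>k$ for $k\ge 1$, so $2x=2n-2+2k>2n-2+k$. Hence $2x$ is a genuinely new sum distinct from the $x+i$ contributions, and the total number of new sums is $k+1$.

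I do not foresee a real obstacle in this argument; it is essentially a careful bookkeeping exercise on arithmetic intervals, and the hypothesis $1\le k\le n-1$ is exactly what is needed to prevent the overlap and coincidence issues. The mild delicacy lies in treating $2x$ as a separate contribution rather than lumping it in with the $x+i$ family, which is why the final count is $k+1$ rather than $k$.
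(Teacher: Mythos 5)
Your proposal is correct and follows essentially the same interval-bookkeeping argument as the paper: compute $x-I_n$ and $(x+I_n)\cup\{2x\}$, subtract the overlap with $[1,n-1]$ and $[0,2n-2]$ respectively, and note that $2x$ is a new sum distinct from the $x+i$ family. Your explicit check that $2x>2n-2+k$ is a point the paper handles implicitly via $|(x+I_n)\cup\{2x\}|=n+1$, but the substance is identical.
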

\begin{proof}
We have $(I_n\cup \{x\})+(I_n\cup \{x\}) = (I_n+I_n)\cup (x+I_n)\cup\{2x\}$. Clearly, all new sums are in 
\begin{align*}
    (x+I_n)\cup \{2x\} \ =\ [(n-1)+k, (n-1)+(k+n-1)]\cup\{2x\}.
\end{align*}
Let $k = (n-1)-j$, where $j\ge 0$. Then $|((x+I_n)\cup\{2x\})\cap (I_n+I_n)| = j+1$. Hence, the number of new sums is $|(x+I_n)\cup\{2x\}|-(j+1) = n+1-(j+1) = n-j = k+1$. 

All new positive differences are in $x-I_n = \{x-(n-1),x-(n-2),\ldots,x\} = [k, k+n-1]$. Note that positive differences in $I_n-I_n$ are $[1, n-1]$. Because 
$[k, k+n-1]\backslash [1, n-1] = [n, k+n-1]$. The number of new positive differences is $|[n, k+n-1]| = k$.
\end{proof}
\begin{lemma}\label{add1toarith}
Let $x\in \mathbb{R}$. Then $\{x\}\cup I_n$ is not a sum-dominant set.
\end{lemma}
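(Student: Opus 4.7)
The plan is to reduce to Proposition~\ref{addnottoofar} whenever possible and to handle the remaining cases by a direct count of new sums and new positive differences contributed by $x$. First note that $I_n$ is symmetric about its center $(n-1)/2$: the reflection $y \mapsto (n-1)-y$ fixes $I_n$ setwise and carries $\{x\} \cup I_n$ to $\{(n-1)-x\} \cup I_n$, an affine image with the same $|A+A|$ and $|A-A|$. So I may assume $x \ge (n-1)/2$. If $x$ already lies in $I_n$, then $\{x\}\cup I_n = I_n$ is symmetric and Lemma~\ref{symnotMSTD} forces balance.

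In the remaining cases, let $p$ and $s$ denote, respectively, the numbers of new positive differences and new sums produced by adjoining $x$. Because $I_n$ is balanced with $|I_n+I_n| = |I_n-I_n| = 2n-1$, and because the difference set is symmetric about $0$, one has $|A-A| - |A+A| = 2p - s$, so it suffices to prove $2p \ge s$.

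If $x$ is an integer with $n \le x \le 2n-2$, then Proposition~\ref{addnottoofar} with $k = x - (n-1)$ gives $p = k$ and $s = k+1$, hence $2p - s = k - 1 \ge 0$. If $x$ is an integer with $x \ge 2n-1$, then $x - I_n$ is disjoint from $[1,n-1]$ and $(x+I_n)\cup\{2x\}$ is disjoint from $I_n+I_n$, so $p = n$ and $s = n+1$, hence $2p - s = n-1 \ge 0$. If $x$ is a non-integer with fractional part $\theta \ne 1/2$, then every element of $x + I_n$ has non-integer fractional part (so avoids $I_n + I_n$), no pair $(i,j) \in I_n \times I_n$ satisfies $i+j = 2x$ (so no two potentially-new positive differences collide with each other, and none collide with $I_n - I_n$), and $2x$ is a non-integer distinct from all sums $x+i$, yielding $p = n$, $s = n+1$, and $2p - s = n-1 \ge 0$.

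The hard case, and the one I expect to be the main obstacle, is the half-integer case $x = m + 1/2$. Two things happen simultaneously: the equation $x - i = j - x$ can be satisfied by several pairs $(i,j)$ with $0 \le i \le m < j \le n-1$, producing collisions among the $n$ potential new positive differences; and $2x = 2m+1$ is an integer which lies in $I_n + I_n$ whenever $m \le n-2$, in which case one potential new sum is lost. My plan is to enumerate, for each admissible $m \ge (n-2)/2$, the exact number of coincidences $\#\{(i,j) : 0 \le i \le m < j \le n-1,\ i+j = 2m+1\}$ and show that the corresponding drop in $2p$ is at most the drop in $s$, so $2p \ge s$ persists. The delicate verification is at $m = (n-2)/2$ (with $n$ even, i.e., $x = (n-1)/2$), where the inequality holds with equality.
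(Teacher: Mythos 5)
Your argument is correct and is essentially the paper's proof reorganized: both count the new sums $s$ and new positive differences $p$ contributed by $x$ and exploit the fact that new differences come in $\pm$ pairs. Your formulation via the exact identity $|A-A|-|A+A|=2p-s$ (valid since $I_n$ is balanced) and the reflection reduction to $x\ge (n-1)/2$ is a cleaner bookkeeping of the same idea; the integer, generic non-integer, and far-away cases are all handled correctly. The one piece you leave as a plan, the half-integer case $x=m+\tfrac12$ with $\tfrac{n-1}{2}\le x<n-1$, does close as you predict: the coincidences $x-i=j-x$ collapse the $n$ candidate positive differences to exactly $p=m+1$ (namely $\tfrac12,\tfrac32,\ldots,m+\tfrac12$), while $2x=2m+1\in I_n+I_n$ kills one candidate sum, so $s=n$ and $2p-s=2m+2-n\ge 0$ precisely because $x\ge(n-1)/2$, with equality only at the center (where the set is symmetric, consistent with Lemma~\ref{symnotMSTD}). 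One small correction to your phrasing there: what you need is not that the drop in $2p$ is at most the drop in $s$ (the former is $2(n-m-1)$, the latter is only $1$), but that their difference does not exceed the slack $n-1$ available from the generic case; that is exactly the inequality $2(n-m-1)-1\le n-1$, i.e., $m\ge (n-2)/2$.
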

\begin{proof}
If $x\in I_n$, we are done because $I_n$ is an arithmetic progression, which is a symmetric set and thus, balanced

For $n=1$, our set has at most 2 elements, which is symmetric and thus, not sum-dominant by Lemma \ref{symnotMSTD}.

We assume $n\ge 2$. Note the number of new sums is at most $(n+1)$. We
consider the following three cases.

\bigskip

    \noindent \textbf{Case I:} $0<x<n-1$. Arrange numbers in $\{x\}\cup I_n$ in increasing order. Either to the left of $x$ or to the right of $x$, there are at least $\lceil n/2\rceil$ numbers. Without loss of generality, assume that $x$ is greater than $\lceil n/2\rceil$ numbers in $I_n$; that is, $0<1<\cdots<\lceil n/2\rceil -1 <x$. The set of new differences has $$D \ =\ \{x-(\lceil n/2\rceil -1),x-(\lceil n/2\rceil -2),\ldots,x-0\}$$ as a subset. 
    \begin{enumerate}
        \item Subcase I.1: $x-1/2\in \mathbb{Z}$. Because $0<x<n-1$, $x+x\in I_n+I_n$, implying that there are at most $n$ new sums. Since the number of new differences is at least $2|D| = 2\lceil n/2\rceil \ge n$, $I_n\cup\{x\}$ is not sum-dominant. 
        \item Subcase I.2: $x-1/2\notin \mathbb{Z}$. Then the set of new differences has $D\cup\{n-1-x\}$ as a subset. Since the number of new differences is at least $2|D\cup\{n-1-x\}| = 2(\lceil n/2\rceil +1)\ge n+2$, $I_n\cup\{x\}$ is not sum-dominant. 
    \end{enumerate}

    \noindent \textbf{Case II:} $n-1<x$. We consider two subcases. 
    \begin{enumerate}
        \item Subcase II.1: $x\notin \mathbb{N}$. The set of new differences include $x-(n-1), x -(n-2),\ldots,x-0$. Therefore, the number of new differences is at least $2n>n+1$, implying that $I\cup\{x\}$ is not sum-dominant. 
        \item Subcase II.2: $x\in \mathbb{N}$. If $x = n$, then $I_n\cup \{x\}$ have numbers from an arithmetic progression and so, is not sum-dominant. Let $x = (n-1)+k$ for some $k\in \mathbb{N}_{\ge 2}$. We have $x-I_n = \{k,k+1,\ldots, k+(n-1)\}$. Because $\max(I_n-I_n)=n-1$, if $k>n-1$, there are $n$ new positive differences and so, $I\cup \{x\}$ is not sum-dominant. If $k\le n-1$ by Proposition \ref{addnottoofar}, there are $k$ new positive differences and $k+1$ new sums, which shows that $I\cup\{x\}$ is not sum-dominant. 
    \end{enumerate}

    \noindent \textbf{Case III:} $x<0$. Due to symmetry, this case is similar to Case II. 
This completes our proof. 
\end{proof}
\begin{corollary}\label{arithnotMSTD}
A set of numbers in an arithmetic progression in union with a singleton set cannot be sum-dominant.  
\end{corollary}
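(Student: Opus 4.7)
The plan is to reduce Corollary \ref{arithnotMSTD} immediately to Lemma \ref{add1toarith} by an affine rescaling argument. Write the arithmetic progression as $A=\{a+id : 0\le i\le n-1\}$ for some $a\in\mathbb{R}$ and common difference $d\in\mathbb{R}$, and let $\{y\}$ be the singleton. The key observation is that for any injective affine map $\varphi(t)=\alpha t+\beta$ with $\alpha\ne 0$, the sizes $|B+B|$ and $|B-B|$ of a finite set $B$ are invariant under $\varphi$: indeed, $\varphi(B)+\varphi(B)=2\beta+\alpha(B+B)$ and $\varphi(B)-\varphi(B)=\alpha(B-B)$, which are in bijection with $B+B$ and $B-B$, respectively.

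First I would dispose of the degenerate cases. If the arithmetic progression has length at most one, or if $d=0$, then $A\cup\{y\}$ has cardinality at most $2$, hence is symmetric (every two-element set is symmetric about its midpoint), and Lemma \ref{symnotMSTD} gives the result. If $y\in A$, then $A\cup\{y\}=A$ is itself an arithmetic progression, hence symmetric and balanced.

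For the main case, assume $n\ge 2$, $d\ne 0$, and $y\notin A$. Apply the affine map $\varphi(t)=(t-a)/d$ to the set $A\cup\{y\}$. Then $\varphi(A)=I_n$ and $\varphi(y)=x$ for some $x\in\mathbb{R}$. By the invariance observation, $A\cup\{y\}$ is sum-dominant if and only if $I_n\cup\{x\}$ is sum-dominant. But Lemma \ref{add1toarith} asserts that $\{x\}\cup I_n$ is never sum-dominant for any $x\in\mathbb{R}$. Therefore $A\cup\{y\}$ is not sum-dominant, which completes the proof.

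There is essentially no obstacle here; the entire content of the corollary was already contained in Lemma \ref{add1toarith}. The only thing to verify with care is the affine invariance of $|A+A|$ and $|A-A|$, which is routine, together with the mild bookkeeping needed to cover the degenerate arithmetic progressions (length $\le 1$ or $d=0$) that are not directly handled by transforming to $I_n$.
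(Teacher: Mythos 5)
Your proposal is correct and is essentially identical to the paper's own proof, which also applies the affine map $t\mapsto (t-a)/d$ to reduce to $I_n\cup\{(m-a)/d\}$ and then invokes Lemma \ref{add1toarith} with a possibly non-integer $x$. Your extra attention to the degenerate cases (length $\le 1$, $d=0$, $y\in A$) and the explicit verification of affine invariance are slightly more careful than the paper's one-line argument, but the route is the same.
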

\begin{proof}
Let $a\in \mathbb{N}$ and $m,d\in \mathbb{N}$. Our set is $A = \{a,a+d,\ldots, a+(n-1)d,m\}$, where $n\in\mathbb{N}$. The set $A$ is sum-dominant if and only if $\frac{1}{d}(A-a) = \{0,1,\ldots,(n-1)\}\cup\{(m-a)/d\}$ is sum-dominant. Note that $(m-a)/d$ may not be a nonnegative integer. This completes the proof. 
\end{proof}
\begin{remark}
Let $n\ge 2$, $x-1/2\notin \mathbb{Z}$ and $x\notin\{-1,n\}$. The difference set of $I_n\cup \{x\}$ has at least one more number than the sum set. This remark is evident from the proof of Lemma \ref{add1toarith}.
\end{remark}
\begin{remark}\label{not1/2onemore}
A set of numbers in an arithmetic progression is an example of a symmetric set. Though the result of Corollary \ref{arithnotMSTD} holds for any set of numbers in an arithmetic progression, it is not true for symmetric sets in general. For example, the set $A_1\backslash\{4\}$ is symmetric, and $A_1$ is sum-dominant.
\end{remark}
The following proposition offers upper bounds for the cardinality of the sum set and the difference set of a set $A$ in terms of $|A|$. The two inequalities are not hard to prove and are used by Hegarty \cite{He}.
\begin{proposition}\label{bounds}
We have the following bounds
\begin{align}
    |A+A| \ &\le \ \frac{|A|(|A|+1)}{2},\label{boundforsum}\\
    |A-A|\ &\le \ |A|(|A|-1)+1\label{boundfordiff}.
\end{align}
The equality in (\ref{boundforsum}) is achieved if the sum of any two numbers is distinct, and the equality in (\ref{boundfordiff}) is achieved if the difference between any two different numbers is distinct. 
\end{proposition}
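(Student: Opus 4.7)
The plan is to bound $|A+A|$ and $|A-A|$ by counting ordered pairs in $A\times A$. There are exactly $|A|^2$ such pairs, and each one produces exactly one sum and exactly one difference, so the only task is to estimate how many of these contributions are forced to coincide.

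For the sum bound, the commutativity of addition forces the pairs $(a_i,a_j)$ and $(a_j,a_i)$ to yield the same element of $A+A$. I would split the $|A|^2$ ordered pairs into the $|A|$ diagonal pairs $(a_i,a_i)$, contributing at most $|A|$ values $2a_i$, and the $|A|(|A|-1)$ non-diagonal ordered pairs, which are grouped in twos by commutativity and so contribute at most $|A|(|A|-1)/2$ distinct sums. Adding these gives $|A|+|A|(|A|-1)/2 = |A|(|A|+1)/2$, with equality exactly when all such sums are pairwise distinct.

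For the difference bound, the argument is similar but the collision pattern is different: the $|A|$ diagonal pairs $(a_i,a_i)$ all collapse to the single element $0\in A-A$, while for the $|A|(|A|-1)$ non-diagonal ordered pairs commutativity offers no help, so each may in principle contribute a distinct nonzero difference. Combining these gives at most $|A|(|A|-1)+1$ distinct differences, with equality when the differences of all ordered pairs of distinct elements are themselves distinct.

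There is essentially no obstacle here — the author already flags the bounds as easy — and the whole proof is a bookkeeping exercise contrasting the $2$-to-$1$ collapse forced by the commutativity of addition with the $|A|$-to-$1$ collapse at $0$ forced by the identity $a-a=0$.
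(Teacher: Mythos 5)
Your counting argument is correct and complete: splitting the $|A|^2$ ordered pairs into the diagonal and the off-diagonal, using commutativity to halve the off-diagonal contribution for sums, and collapsing the diagonal to the single element $0$ for differences, yields exactly $|A|(|A|+1)/2$ and $|A|(|A|-1)+1$ with the stated equality conditions. The paper gives no proof of this proposition at all (it only remarks that the bounds are easy and cites Hegarty), so there is nothing to compare against; your bookkeeping is the standard argument one would supply.
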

The next observation plays a key role in reducing the complexity of our proof that a set of too small a cardinality cannot be sum-dominant. 
\begin{observation}\label{losesum}
Let $A\subseteq \mathbb{N}$ and consider $A\pm A$. Let $a_i<a_j\le a_m<a_n\in A$ such that $a_j-a_i = a_n-a_m$. If $a_j\neq a_m$, we have another pair of equal positive differences $a_m-a_i = a_n-a_j$. If $a_j = a_m$, then we do not have another pair. In both cases, we have $a_j+a_m = a_i+a_n$, a pair of equal sums. Hence, for $k$ pairs of equal positive differences, there exist at least $k/2$ pairs of equal sums. 
\end{observation}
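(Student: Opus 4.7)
The proof of Observation \ref{losesum} decomposes into three steps: verify two elementary algebraic identities, and then establish the counting bound $k/2$ via a double-counting argument. I would begin by starting from the hypothesis $a_j - a_i = a_n - a_m$. Adding $a_m - a_j$ to both sides immediately yields $a_m - a_i = a_n - a_j$; since $a_i < a_j \le a_m < a_n$, both sides are positive, so this is a legitimate equality of positive differences. It represents a \emph{distinct} pair of equal differences precisely when $a_j \ne a_m$, because if $a_j = a_m$ then the ordered pairs $\{(a_i, a_m), (a_j, a_n)\}$ and $\{(a_i, a_j), (a_m, a_n)\}$ coincide. Adding $a_m + a_i$ to both sides of the original equation gives $a_j + a_m = a_i + a_n$, producing the promised pair of equal sums.

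For the last sentence, the plan is to define a map $\Phi$ from the set $\mathcal{D}$ of unordered equal-positive-difference pairs $\{(a_i,a_j),(a_m,a_n)\}$ (with $a_i<a_j$, $a_m<a_n$, $a_j-a_i=a_n-a_m$, and $a_j \le a_m$) to the set $\mathcal{S}$ of unordered equal-sum pairs $\{\{x,y\},\{z,w\}\}$ with $x+y=z+w$, by sending such a difference pair to $\{\{a_i,a_n\},\{a_j,a_m\}\}$. The identity from Step 1 guarantees that $\Phi$ is well-defined. If I can show that $\Phi$ is at most $2$-to-$1$, then $|\mathcal{S}| \ge |\mathcal{D}|/2 = k/2$, which is exactly the desired conclusion.

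To prove the $2$-to-$1$ bound, fix any element of $\mathcal{S}$ and write it as $\{\{p,s\},\{q,r\}\}$ with $p+s=q+r$ and $p<q\le r<s$ (this ordering is forced by $p+s=q+r$ together with $p < s$ and $q < r$, after suitably relabeling). The only difference pairs in $\mathcal{D}$ whose image under $\Phi$ is this specific sum pair come from the two "diagonal" decompositions: either $\{(p,q),(r,s)\}$, which witnesses $q-p=s-r$, or $\{(p,r),(q,s)\}$, which witnesses $r-p=s-q$. These two preimages collapse to one exactly when $q = r$, recovering the degenerate case already flagged in the first step.

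The proof is essentially bookkeeping, so the only real obstacle is being scrupulous about what counts as a "pair"; in particular, one must treat ordered versus unordered pairs consistently and correctly isolate the $a_j = a_m$ case so that the counting statement does not over- or under-count. Once the map $\Phi$ is set up carefully, both the well-definedness and the $2$-to-$1$ bound follow immediately from the algebraic identities established at the start.
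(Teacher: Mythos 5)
Your proposal is correct and follows essentially the same route as the paper: the paper's observation is itself the proof, pairing each equal-difference pair with its unique ``partner'' sharing the common sum $a_j+a_m=a_i+a_n$ and concluding that sum pairs number at least half the difference pairs. You merely make the implicit double-counting explicit by exhibiting the map $\Phi$ and verifying the $2$-to-$1$ bound from the sum-pair side, which is a slightly more careful packaging of the identical argument.
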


\section{Proof of Theorem \ref{notsum-dominant}}\label{lessthan6}
For clarity, we split the proof into two parts. 
\subsection{A set $A$ with $|A|\le 4$ is not sum-dominant.}
\begin{proof}
We proceed by case analysis of $|A|$. 

If $|A| = 1$, then $|A+A| = |A-A| = 1$ and so, $A$ is not sum-dominant.

If $|A| = 2$, suppose that $A = \{a_1,a_2\}$. Then $A$ is symmetric with respect to $a_1+a_2$. By Lemma \ref{symnotMSTD}, $A$ is not sum-dominant. 

If $|A| = 3$, then $A$ is the union of an arithmetic progression with a singleton set. By Corollary \ref{arithnotMSTD}, $A$ is not sum-dominant. 

If $|A|=4$, by Proposition \ref{bounds}, $|A+A|\le 10$, while $|A-A|\le 13$. Let $k$ be the number of pairs of equal positive differences that $A-A$ has. Then, in order that $A$ is sum-dominant, 
\begin{align*}13-2k\ <\ 10-\frac{k}{2}.\end{align*}
The $k/2$ comes from Observation \ref{losesum}. We have $k\ge 3$. Therefore, $|A-A|\le 13-3\cdot 2 = 7$.
Denote $A = \{a_1,a_2,a_3,a_4\}$ with $a_1<a_2<a_3<a_4$ and $d_i = a_{i+1}-a_i$. We write out all nonnegative differences in $A-A$
\begin{align*}
    0\mbox{ }\mbox{ }\mbox{ }&d_1\mbox{ }\mbox{ }\mbox{ }d_1+d_2\mbox{ }\mbox{ }\mbox{ }d_1+d_2+d_3\\
                             &d_2\mbox{ }\mbox{ }\mbox{ }d_2+d_3\\
                             &d_3.
\end{align*}
All differences in row 1 are pairwise distinct. These nonnegative differences give 7 differences in total. Because $|A-A|\le 7$, we are not allowed to have any new differences from row 2 and row 3. Clearly, $d_2 = d_1$, which implies that $a_1,a_2,a_3$ is an arithmetic progression. By Corollary \ref{arithnotMSTD}, $A$ is not sum-dominant. 
\end{proof}

\subsection{A set $A$ with $|A|=5$ is not sum-dominant.}\label{5not}

\begin{proof}
By Proposition \ref{bounds}, we know that $|A+A|\le 15$, while $|A-A|\le 21$. Let $k$ be the number of pairs of equal positive differences. Due to Observation \ref{losesum}, we have 
$21- 2k\ < \ 15-\frac{k}{2}.$
So, $k\ge 5$ and $|A-A|\le 11$. Denote $A = \{a_1,a_2,\ldots, a_5\}$ with $a_1<a_2<\cdots<a_5$ and $d_i = a_{i+1}-a_i$. The following lists all nonnegative differences in $A-A$
\begin{align*}
    0\mbox{ }\mbox{ }\mbox{ }&d_1\mbox{ }\mbox{ }\mbox{ }d_1+d_2\mbox{ }\mbox{ }\mbox{ }d_1+d_2+d_3\mbox{ }\mbox{ }\mbox{ }d_1+d_2+d_3+d_4\\
                             &d_2\mbox{ }\mbox{ }\mbox{ }d_2+d_3\mbox{ }\mbox{ }\mbox{ }d_2+d_3+d_4\\
                             &d_3\mbox{ }\mbox{ }\mbox{ }d_3+d_4\\
                             &d_4.
\end{align*}
Differences in row 1 are pairwise distinct and account for $9$ differences in $A-A$. Because $|A-A|\le 11$, we are allowed to have at most one more difference from rows 2, 3, and 4. 

\bigskip

\noindent \textbf{Case I:} $d_2\neq d_1$. Then $$A-A\  =\ \{0,\mbox{ }d_1,\mbox{ }d_2,\mbox{ }d_1+d_2,\mbox{ }d_1+d_2+d_3,\mbox{ }d_1+d_2+d_3+d_4\}.$$
    \begin{enumerate}
        \item Subcase I.1: $d_2+d_3 = d_1$. Because $d_1>d_3$ and $d_3\in A-A$, $d_3 = d_2$. Since $d_2+d_3+d_4\in A-A$, either $d_2+d_3+d_4 = d_1+d_2$ or $d_2+d_3+d_4 = d_1+d_2+d_3$. 
        Combine the former with $d_2+d_3=d_1$ to have $d_2 = d_4$. We have $d_2=d_3=d_4$, which, by Corollary \ref{arithnotMSTD}, makes $A$ not sum-dominant. The latter gives $d_1 = d_4$, which, combined with $d_2=d_3$, makes $A$ symmetric and thus, not sum-dominant. 
        \item Subcase I.2: $d_2+d_3 = d_1+d_2$, implying $d_1=d_3$. Because $d_2+d_3+d_4\in A-A$, it must be that $d_2+d_3+d_4=d_1+d_2+d_3$ and so, $d_1=d_4$. Similarly, because $d_3+d_4\in A-A$ and $d_1 = d_3 = d_4 \neq d_2$, we must have $d_3 +d_4 = d_2$. Due to the fact that sum-dominant is preserved under affine transformations, we let $a_1 = 0$ and $d_2 = 2$ to have $A = \{0,1,3,4,5\}$. This set is not sum-dominant.
    \end{enumerate}

\noindent \textbf{Case II:} $d_2 = d_1$. If $d_2+d_3 = d_1 + d_2$, then $d_1 = d_3$. Since $d_1 = d_2 = d_3$, $A$ is not sum-dominant due to Corollary \ref{arithnotMSTD}. Therefore, $d_2+d_3\neq d_1+d_2$ or, equivalently, $d_1\neq d_3$ and 
    $$A-A \ =\  \{0,\mbox{ }d_1,\mbox{ }d_1+d_2,\mbox{ }d_2+d_3,\mbox{ }d_1+d_2+d_3,\mbox{ }d_1+d_2+d_3+d_4\}.$$
    Because $d_3\in A-A$ and $d_3 \neq d_1$, we know $d_3 = d_1+d_2$.
    Since $d_2+d_3+d_4\in A-A$ and $d_3>d_1$, it must be that $d_2+d_3+d_4 = d_1+d_2+d_3$. 
    Hence, $d_1 = d_4$ and so, $d_1 = d_2 = d_4 = d_3/2$. Due to the fact that sum-dominant is preserved under affine transformations, we let $a_1 = 0$ and $d_3 = 2$ to have $A = \{0,1,2,4,5\}$. This set is not sum-dominant.

We have shown that a set of cardinality $5$ is not sum-dominant.\end{proof}

\section{Proof of Theorem \ref{two}}\label{twointoarith}
\begin{lemma}\label{lemma1}
Let $n\in\mathbb{N}$ and two numbers $x,y\in \mathbb{R}$ such that $x\pm y$ are not integers. Then $I_n\cup\{x,y\}$ is not sum-dominant. 
\end{lemma}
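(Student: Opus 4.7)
Set $A = I_n \cup \{x, y\}$ and write $\alpha, \beta \in [0,1)$ for the fractional parts of $x$ and $y$.  The hypothesis $x \pm y \notin \mathbb{Z}$ translates to $\alpha \neq \beta$ and $\alpha + \beta \not\equiv 0 \pmod{1}$.  Because $I_n$ is an arithmetic progression, $|I_n + I_n| = |I_n - I_n| = 2n - 1$, so to prove $A$ is not sum-dominant it suffices to compare the non-integer portions of $A + A$ and $A - A$, handling any residual integer contributions via Lemma~\ref{add1toarith} applied to the auxiliary set $B = I_n \cup \{x\}$.

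The key step is to classify the non-integer sums and differences by their fractional part modulo $1$.  The non-integer sums lie inside $(x + I_n) \cup (y + I_n) \cup \{2x, 2y, x + y\}$, contributing at most $2n + 3$ elements.  The non-integer differences contain $(x - I_n) \cup (I_n - x) \cup (y - I_n) \cup (I_n - y)$, whose elements have fractional parts $\alpha$, $1 - \alpha$, $\beta$, $1 - \beta$, respectively.  In the generic case $\alpha, \beta \notin \{0, 1/2\}$, these four fractional parts are pairwise distinct elements of $(0,1)$, so the four subsets above are pairwise disjoint and yield at least $4n$ non-integer differences.  Since $4n > 2n + 3$ for $n \ge 2$, this forces $|A - A| > |A + A|$.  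For $n \le 1$ the set $A$ has at most three elements and Theorem~\ref{notsum-dominant} gives the conclusion.

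The remaining cases are the degenerate ones in which $\alpha$ or $\beta$ lies in $\{0, 1/2\}$.  If one of $x, y$ already belongs to $I_n$, then $A$ is an arithmetic progression plus a single point and Lemma~\ref{add1toarith} applies directly; otherwise the plan is to take $B = I_n \cup \{x\}$ (so that $|B - B| \ge |B + B|$ by Lemma~\ref{add1toarith}) and count what adding $y$ contributes.  The delicate situation is $\beta = 1/2$: then $y - B$ and $B - y$ share the fractional part $1/2$ and may overlap, and moreover $2y$ is an integer.  The intersection $(y - B) \cap (B - y)$ is in bijection with ordered pairs $(b_1, b_2) \in B \times B$ satisfying $b_1 + b_2 = 2y$, hence has size $2m$ for some $m \le |B|/2$, while $2y$ is a new sum only when $m = 0$.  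A short count then shows that (new differences) $-$ (new sums) $= |B| - 2m - [m = 0] \ge 0$, which combined with $|B - B| \ge |B + B|$ forces $|A - A| \ge |A + A|$.  The remaining subcases ($\beta = 0$, $\alpha = 1/2$, and their subsubcases $\beta \in \{1/4, 3/4\}$) are handled by symmetric or strictly simpler versions of the same accounting, and this overlap bookkeeping is the main technical obstacle.
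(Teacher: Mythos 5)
Your proposal is correct in outline and takes a genuinely different route from the paper. The paper also starts from $B=I_n\cup\{x\}$ (Lemma~\ref{add1toarith}) and adds $y$, but it then splits by the \emph{position} of $y$ (inside $[0,n-1]$, to the right, to the left) and by whether $y$ or $y-1/2$ is an integer, leaning on Proposition~\ref{addnottoofar} for the integer case. You instead classify everything by fractional part, and this pays off handsomely in the generic case $\alpha,\beta\notin\{0,1/2\}$: the four classes $\alpha,1-\alpha,\beta,1-\beta$ are pairwise distinct precisely because $x\pm y\notin\mathbb{Z}$, giving $4n$ non-integer differences against at most $2n+3$ non-integer sums, with no positional analysis at all. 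That is cleaner and more conceptual than the paper's Cases I--III. Your $\beta=1/2$ bookkeeping (the intersection $(y-B)\cap(B-y)$ has size $2m\le|B|$, and $2y$ is a new sum only if $m=0$) is also the right mechanism and is essentially Observation~\ref{losesum} in disguise.

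Two points need tightening before this is a complete proof. First, if one of $x,y$ is an integer outside $I_n$ you must put \emph{that} element into the base set (i.e., take $B=I_n\cup\{y\}$ when $\beta=0$), since otherwise the new differences $y-I_n$ and $I_n-y$ are integers and collide massively with $I_n-I_n$; your text fixes $B=I_n\cup\{x\}$ throughout, so say explicitly that the choice of base point is the one with integer (or half-integer) fractional part. Second, the sub-subcase $\beta=1/2$ with $\alpha\in\{1/4,3/4\}$ is not ``strictly simpler'': there $y-x$ and $x-y$ can lie in $B-B$ and $2x$ can lie in $y+I_n$, so your displayed identity for (new differences) $-$ (new sums) can be off by $2$, and the crude bound can dip to $-1$ when $2m_0=n$. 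It is rescued by a compensation you do not state: $y-x=x-i$ forces $y+i=2x$, so each pair of lost differences kills one purported new sum, restoring the inequality. With those two clarifications made explicit, the argument closes.
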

\begin{proof}
If $n=1$, we know that $I_n\cup\{x,y\}$ is not sum-dominant because its cardinality is 3. So, we assume that $n\ge 2$ and $y>x$. The condition $x\pm y\notin \mathbb{Z}$ guarantees that $(y\pm I_n)\cap (x\pm I_n) = \emptyset$. For the proof, we let our original set be $I_n\cup\{x\}$ and we introduce $y$ to the set; that is, $y\rightarrow I_n\cup\{x\}$. By Lemma \ref{add1toarith}, $I_n\cup\{x\}$ is not sum-dominant. Note that the introduction of $y$ gives at most $n+2$ new sums. We consider three cases.

\bigskip

\noindent \textbf{Case I:} $0 <y< n-1$. Similar to Case I in the proof of Lemma \ref{add1toarith}, the introduction of $y$ into $I_n\cup\{x\}$ gives at least $\lceil n/2\rceil$ new positive differences. These differences result from the interaction of $y$ and $I_n$. 
\begin{enumerate} 
\item Subcase I.1: $y-1/2\notin \mathbb{Z}$. Similar to Subcase I.2 in the proof of Lemma \ref{add1toarith}, either $(n-1)-y$ or $y-0$ is another new positive difference. Hence, the number of new differences is at least $2\cdot (\lceil n/2\rceil + 1) \ge n+2$. Therefore, $I_n\cup\{x,y\}$ is not sum-dominant. 
\item Subcase I.2: $y-1/2\in \mathbb{Z}$. Due to our condition that $y\pm x\notin \mathbb{Z}$, causing $x-1/2\notin \mathbb{Z}$ and Remark \ref{not1/2onemore}, the difference set of $I\cup\{x\}$ has one more number than the sum set. Also, since $2y\in (I_n+I_n)$, the number of new sums is at most $n+1$. Because the number of new differences is at least $2\lceil n/2\rceil\ge n$, $I\cup\{x,y\}$ is not sum-dominant.  
\end{enumerate}
    
\noindent \textbf{Case II:} $n-1<y$. We have two subcases. 
    \begin{enumerate}
    \item Subcase II.1: $y\notin \mathbb{N}$. The set of new differences includes $y-(n-1),y-(n-2),\ldots,y-0$. Therefore, the number of new differences is at least $2n>n+2$, implying that $I\cup\{x,y\}$ is not sum-dominant. 
    \item Subcase II.2: $y\in \mathbb{N}$. Let $y = (n-1)+k$ for some $k\in\mathbb{N}_{\ge 2}$. The set of differences related to $y$ is $y-(I_n\cup\{x\}) \supseteq \{k,k+1,\ldots,k+(n-1)\}$. If $k>n-1$, there are $n$ new positive differences and so, $I\cup\{x,y\}$ is not sum-dominant. If $k\le n-1$, there are at least $k$ new positive differences and at most $k+2$ new sums by Proposition \ref{addnottoofar}. 
\end{enumerate}
    
\noindent \textbf{Case III:} $y<0$. Due to symmetry, this case is the same as Case II. 
We complete the proof. 
\end{proof}
\begin{lemma}\label{lemma2}
Let $n\in\mathbb{N}$ and two numbers $x,y\in \mathbb{R}$ such that $x \pm y$ are integers. Then $I_n\cup\{x,y\}$ is not sum-dominant. 
\end{lemma}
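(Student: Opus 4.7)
The plan hinges on the algebraic consequence of $x \pm y \in \mathbb{Z}$: adding and subtracting these two relations yields $2x, 2y, x - y \in \mathbb{Z}$, so $x$ and $y$ share a common fractional part that is either $0$ or $1/2$. This yields two main cases: (A) $x, y \in \mathbb{Z}$, in which the whole set $I_n \cup \{x, y\}$ is integer-valued; and (B) $x, y \in \mathbb{Z} + \frac{1}{2}$, which the sum-dominance-preserving affine map $t \mapsto 2t$ converts into an instance of the Case-A flavor, namely $\{0, 2, \ldots, 2n-2\} \cup \{2x, 2y\}$, where $2x, 2y$ are odd integers disjoint from the doubled progression. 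I would therefore unify both cases by proving the mildly more general statement that an arithmetic progression of integers together with two integer intruders is never sum-dominant.

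For this unified claim, assume without loss of generality $b_1 \le b_2$. If both $b_1, b_2$ lie in the progression $A$, the set is $A$, balanced by Lemma \ref{symnotMSTD}; if exactly one lies in $A$, Corollary \ref{arithnotMSTD} finishes. Otherwise I would follow the template of Lemma \ref{lemma1}: view $A \cup \{b_1, b_2\}$ as obtained by introducing $b_2$ into $A \cup \{b_1\}$, the latter being not sum-dominant by Corollary \ref{arithnotMSTD}, and count the new sums versus the new positive differences. The key structural observation is that because $b_2 - b_1$ is an integer and (in both subcases of our hypothesis) lies in the lattice generated by the progression's common difference, the shifted trails $b_2 + A$ and $b_1 + A$ overlap in exactly the same number of points as the difference trails $b_2 - A$ and $b_1 - A$. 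Thus every sum coincidence caused by the simultaneous presence of $b_1$ and $b_2$ is matched by a positive-difference coincidence, and the counting reduces to the clean sum-versus-difference comparison already carried out in Lemma \ref{lemma1}.

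With this overlap-matching in hand, I would split into positional sub-cases analogous to Lemma \ref{lemma1}: $b_2 > \max A$ with $b_1$ either below $\min A$ or between $\max A$ and $b_2$; the case $b_2 < \min A$ reduces by negation. Proposition \ref{addnottoofar}, or a spacing-$d$ analogue proved in exactly the same way, gives precise counts of new sums and new positive differences from the interaction of $b_2$ with $A$; the only further contributions are the single sum $b_1 + b_2$ and the single positive difference $|b_1 - b_2|$, each possibly redundant with the counts above. The overlap-matching observation then upgrades the not-sum-dominance of $A \cup \{b_1\}$ to that of $A \cup \{b_1, b_2\}$.

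The principal obstacle I foresee is the delicate sub-case in which $b_1$ and $b_2$ both lie just beyond $\max A$ with $0 < b_2 - b_1$ less than the length of $A$: in this regime $b_2 - b_1$ already appears as a positive difference of $A - A$, the sum $b_1 + b_2$ may coincide with an element of $A + A$, and the shifted trails of $b_1$ and $b_2$ crowd simultaneously against $A + A$ and each other. The overlap-matching observation is designed precisely to absorb this scenario, but the explicit bookkeeping --- which sums are truly new, which positive differences are truly new, and which coincidences occur at the boundary --- will require a careful enumeration along the lines of Subcase II.2 in the proof of Lemma \ref{lemma1}.
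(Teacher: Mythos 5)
Your opening reduction is sound and is a nice idea the paper does not use: $x\pm y\in\mathbb{Z}$ does force $x,y$ to be both integers or both half-integers, and doubling turns the latter into an even integer progression with two odd intruders, so it would indeed suffice to prove that an arithmetic progression of integers together with two integer intruders is never sum-dominant. But two genuine gaps remain in your plan for that unified claim. First, your positional case list ($b_2>\max A$ with $b_1$ either below $\min A$ or between $\max A$ and $b_2$, plus the mirror image) tacitly assumes that an intruder which is not an \emph{element} of $A$ must lie outside the interval $[\min A,\max A]$. That is true when the common difference is $1$, but in exactly the case your doubling was meant to absorb (difference $2$, odd intruders) an intruder can sit strictly inside the span of $A$ without belonging to it. These configurations --- one or both intruders interior to the progression --- are Parts II and III of the paper's proof and contain its most delicate counting (the arguments about which side of the center of $I_n$ the points lie on); your enumeration never reaches them.

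Second, the overlap-matching observation, while correct for the two trails it names ($b_1+A$ versus $b_2+A$, and $b_1-A$ versus $b_2-A$, are translates of the same progression by the same amount, hence meet in equally many points), does not control the coincidences that actually make the lemma tight. When $b_1<\min A<\max A<b_2$ the trails $b_1-A$ and $b_2-A$ are disjoint, yet the \emph{positive} differences $A-b_1$ and $b_2-A$ can coincide almost entirely (the set is then nearly symmetric), and no sum-trail overlap compensates; the paper must single out the exactly symmetric case and count by hand (its Part IV). The coincidences involving $b_1+b_2$, $2b_1$, $2b_2$, and $|b_2-b_1|$ with $A+A$, $A-A$, and the trails are likewise unmatched by your observation. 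Finally, the counting of Lemma \ref{lemma1} cannot be imported wholesale: its hypothesis $x\pm y\notin\mathbb{Z}$ is precisely what guarantees the two intruders' sum and difference trails never interact, and that is exactly what fails under the present hypothesis. The skeleton (introduce $b_2$ into the not-sum-dominant set $A\cup\{b_1\}$, apply Proposition \ref{addnottoofar} and Corollary \ref{arithnotMSTD}) matches the paper's, but the step that is supposed to reduce everything to the earlier clean comparison is asserted rather than proved, and as written the argument would fail on the interior-intruder and near-symmetric straddling configurations.
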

\begin{proof}
The proof is divided into four parts, each of which deals with a specific position of $x$ and $y$ when being introduced to $I_n$.

\begin{center}\textbf{Part I. $n-1<x<y$}\end{center} We know that $\{x,y\}\rightarrow I_n$ gives at most $2n+3$ new sums. 

    \noindent \textbf{Case I:} $x,y\notin \mathbb{N}$. Then $x,y\rightarrow I_n$ gives the following $n+1$ new differences $D = \{x-(n-1),x-(n-2),\ldots,x-0\}\cup\{y\}$. Therefore, in order that $I_n\cup\{x,y\}$ is sum-dominant, $x,y\rightarrow I_n$ must give exactly $2n+3$ new sums. 

    \begin{enumerate}
    \item Subcase I.1: $y-x\in\mathbb{N}$. To have $2n+3$ new sums, $[y+0,y+(n-1)]$ and $[x+0,x+(n-1)]$ must be disjoint, implying that $y-(n-1)>x$. So, $y-(n-1)$ is a new difference that is not in $D$. Hence, we have $n+2$ new positive differences and so, $I\cup\{x,y\}$ is not sum-dominant.
    \item Subcase I.2: $y+x\in\mathbb{N}$. 
    \begin{itemize}
        \item If $x-1/2\notin \mathbb{Z}$, then $y-1$ is a new positive difference not in $D$ and we have $n+2$ new positive differences. 
        \item If $x-1/2\in \mathbb{Z}$, then note that if $y>x+1$, we have $y-1$ is a new difference not in $D$. So, $y\le x+1$. Because $x-1/2\in\mathbb{Z}$ and $x+y\in\mathbb{N}$, $y = x+1$. Hence, we have a pair of equal sums and so, the number of new sums is at most $2n+2$. Therefore, $I\cup\{x,y\}$ is not sum-dominant.  
    \end{itemize}
    \end{enumerate}
    
    \noindent \textbf{Case II:} $x,y\in \mathbb{N}$. Write $x = (n-1)+k$ for some $k\ge 2$ and $y = x+j$ for some $j\ge 1$. Note $x-I_n = \{k,k+1,\ldots,k+(n-1)\}$. If $k>n-1$, then all numbers in $x-I_n$ are new differences. Because $y>x = k+(n-1)$, $y$ is another new difference. So, the number of new differences is at least $2(n+1) = 2n+2$. Therefore, in order that $I_n\cup\{x,y\}$ is sum-dominant, $x,y\rightarrow I_n$ must give exactly $2n+3$ new sums. Similar to Subcase I.1 above, we can show that $I\cup\{x,y\}$ is not sum-dominant. Hence, $k\le n-1$.
    
    If $j>n-1$, then $y-(I_n\cup\{x\})$ is the set of $n+1$ new positive differences. In order that $I\cup\{x,y\}$ is sum-dominant, $x,y\rightarrow I_n$ must gives exactly $2n+3$ new sums. This is a contradiction because $k\le n-1$, causing $x+0\in [0,2n-2]$. Hence, $j\le n-1$. 
    
    We have shown that $j,k\le n-1$. Suppose that $k = j = n-1$. It can be easily checked that $I_n\cup\{2n-2,3n-3\}$ is not sum-dominant. So, either $k\le n-2$ or $j\le n-2$. 
    \begin{itemize}
    \item If $k\le n-2$, we know that $x+0 = (n-1)+k \le 2n-3$ and $x+1 = (n-1)+k+1\le 2n-2$. Also, because $j\le n-1$, $x+0\le y+0\le x+(n-1)$. So, the number of new sums is at most $2n$. 
    
    \item If $j\le n-2$, then $x\le y+0\le  y+1\le (x+j)+1\le x+(n-1)$. Also, because $k\le n-1$, $x+0\le (n-1)+k\le 2n-2$. The number of new sums is also at most $2n$. 
    \end{itemize}
    We have shown that the number of new sums is at most $2n$. Suppose that $k+j>n-1$. Because $y = (n-1)+(k+j)$, $y\rightarrow I_n$ gives $n$ new positive differences. This implies that $I_n\cup\{x,y\}$ is not sum-dominant. So, $k+j\le n-1$. We consider the sum set of $I_n\cup\{x,y\}$. Note that $\min (x+I_n)  < \min (y+I_n) = (n-1)+(k+j)\le 2n-2$, so the set of new sums is a subset of $((y+I_n)\backslash (I_n+I_n))\cup\{x+y,2x,2y\}$, which is at most $k+j+3$ due to Proposition \ref{addnottoofar}. Indeed, Proposition \ref{addnottoofar} states that since $y = (n-1) + k+ j$ and $k+j\le n-1$, $y\rightarrow I_n$ gives $k+1$ new sums (including $2y$). Also by Proposition \ref{addnottoofar}, $y\rightarrow I_n$ gives $k+j$ new positive differences. Since $2(k+j)\ge k+j+3$, $I_n\cup\{x,y\}$ is not sum-dominant. 

\begin{center}\textbf{Part II.} $0<x<n-1<y$\end{center}
    
   \noindent \textbf{Case I:} If $x-1/2\notin \mathbb{Z}$, there are $n$ new positive differences from the interaction between $x$ and $I_n$. Since $y-0>n-1$, $y$ is another new positive difference. If $x+y\in\mathbb{N}$, $y-1$ is another positive difference. Hence, we do not have a sum-dominant set because we have at most $2n+3$ new sums. Therefore, in order that $I_n\cup\{x,y\}$ is sum-dominant, $y-x\in\mathbb{N}$ and $x,y\rightarrow I_n$ must give exactly $2n+3$ new sums, which implies that $(x+[0,n-1])\cap (y+[0,n-1]) = \emptyset$. So, $y>(n-1)+x$. This inequality leads to the following two cases.  
   \begin{itemize}
       \item If $x>1$, then $y>n$ and there is another new difference $y-1>n-1$.
       \item If $0<x<1$, then $n-1<y<n$. Hence, $n-1<x+y<n+1$ and $n-2<y-x<n$. Because $y-x\in\mathbb{N}$, $y-x = n-1$ or $y+0 = x+(n-1)$. Since we have a pair of equal sums, the number of new sums is less than $2n+3$. 
   \end{itemize}

    \noindent \textbf{Case II:} If $x-1/2\in\mathbb{Z}$, the following are $n$ new positive differences $y-(n-1),y-(n-2),\ldots,y-0$ from $x,y\rightarrow I_n$. Hence, the number of new differences is at least $2n$. Because $2x\in\mathbb{N}$ and $0<2x<2n-2$, the number of new sums is at most $2n+2$. In order that $I\cup\{x,y\}$ is sum-dominant, there must not be a new positive difference other than the $n$ differences above.
    So, the positive difference $(n-1)-x$ must be equal to $y-(n-i)$ for some $1\le i\le n$ and so, $x+y = 2n-1-i$.
    Similarly, the positive difference $x-0$ must be equal to $y-(n-j)$ for some $1\le j\le n$ and so, $x = y-(n-j)$ or equivalently, $x+(n-j) = y+0$. Hence, we have at least two pairs of equal sums. Therefore, the number of new sums is at most $2n$. This shows that $I\cup\{x,y\}$ is not sum-dominant. 
    
\begin{center}\textbf{Part III.} $0<x<y<n-1$\end{center}
Let $k$ be the number such that $k<x<k+1$.

\noindent \textbf{Case I:} $x+\frac{1}{2} \notin \mathbb{Z}$.
\begin{enumerate}
    \item Subcase I.1: $y-x\in\mathbb{N}$. New positive differences include
    \begin{align*}
        x-k &\ <\ x-(k-1) \ <\ \cdots \ <\ x-0 \ <\ y-0\\
        (k+1)-x&\ <\ (k+2)-x \ <\ \cdots \ < \ (n-1)-x.
    \end{align*}
    Because $x+\frac{1}{2}\notin\mathbb{Z}$, numbers in the two rows are pairwise distinct. We have these $n+1$ new positive differences. Since $0 < y-x\in\mathbb{N}$, there exists $0< i < n-1$ such that $y-x = i$. So, $y+0 = x+i$, implying that we have at most $2n+2$ new sums. Therefore, $I\cup\{x,y\}$ is not sum-dominant. 
    \item Subcase I.2: $y+x\in\mathbb{N}$. Because $0<x+y<2n-2$, we have at most $2n+2$ new sums. 
    \begin{itemize}
    \item If $x,y$ lie on the same side of the center of $I_n$, we assume that $0<(n-1)/2<x<y<n-1$. The following are new differences
    \begin{align}  
    x-k&\ <\ \cdots \ < \ x-1\ <\ x-0 \label{newdiff1}\\
    (k+1)-x &\ <\ \cdots\ <\ (n-2)-x\ <\ (n-1)-x\ <\ y-0.\label{newdiff2}\end{align}
    Because $x+\frac{1}{2}\notin \mathbb{Z}$, the numbers in these two rows are pairwise distinct. These are $n+1$ new positive differences. Hence, $I\cup\{x,y\}$ is not sum-dominant. 
    \item If $x,y$ lies on two sides of the center of $I_n$, we assume that $x$ is closer to the center than $y$. (If they are equidistant from the center, we have a balanced set with the same center of $I_n$). The numbers in Row \ref{newdiff1} and Row \ref{newdiff2} are still new positive differences. Hence, $I_n\cup\{x,y\}$ is not sum-dominant. 
    \end{itemize}
\end{enumerate}

\noindent \textbf{Case II:} $x+\frac{1}{2}\in \mathbb{Z}$. We write $y = x+ j$ for some $1\le j<n-1-x$.
\begin{enumerate}
    \item Subcase II.1: $x,y$ lie on one side of the center of $I_n$. Assume that $0<\frac{n-1}{2}\le x<x+j<n-1$. We have
    \begin{align*}
        x+[0,n-1] &\ =\ [x,x+n-1],\\
        y+[0,n-1]&\ =\ [y,y+(n-1)] \ =\ [x+j, x+j+(n-1)].
    \end{align*}
    Because $0<2x,2y, x+y<2n-2$, they are not new sums and so, all new sums are $[x,x+n-1]\cup[x+j,x+j+(n-1)]$, which contains $n+j$ numbers. The following are new positive differences
    \begin{align*}
        x-k \ <\ x-(k-1)\ <\ \cdots\ <\ x-0\ <\ (x+j)-(j-1) \ <\ \cdots \ <\ (x+j)-0.
    \end{align*}
    So, there are at least $2(j+k+1)$ new differences. Note that $$2(j+k+1) \ =\ 2j+2(k+1)\ >\ 2j+2x\ \ge \ 2j+2\cdot \frac{n-1}{2} \ =\  2j+n-1 \ \ge\ n+j.$$
    Hence, $I_n\cup\{x,y\}$ is not sum-dominant. 
    \item Subcase II.2: $x,y$ lie on two side of the center of $I_n$. As above, the number of new sums is still $n+j$. Without loss of generality, assume that $(x+j)-(n-1)/2<(n-1)/2-x$; that is, $x+j$ is closer to the center. We have
    \begin{align*}
        (x+j) - [0,k+j] &\ =\ \{x-k, x-k+1,\ldots, x+j\},\\
        [x+(x+j)+1,n-1] - x &\ =\{x+j+1,\ldots,n-1-x\}.
    \end{align*}
    Because 
    $$x-k\ <\ x-k+1\ <\ \cdots\ <\ x+k<x+j+1\ <\ \cdots\ <\ n-1-x,$$
    all these $n-2x+k$ numbers are distinct positive differences and we have $2(n-2x+k) = 2(n-k-1)$ new differences. Since $2(n-k-1)>n+j$ ($x+j$ is closer to the center), $I\cup\{x,y\}$ is not sum-dominant.  
\end{enumerate}
    
\begin{center}\textbf{Part IV.} $x<0<n-1<y$\end{center}
If $y-(n-1)=0-x$, we have a symmetric set, which is not sum-dominant. Without loss of generality, assume that $y-(n-1)>0-x$. Note $\{x,y\}\rightarrow I_n$ gives at most $2n+3$ new sums. If we can show that the number of new differences is at least $n+2$, then the new set is not sum-dominant. 

\noindent \textbf{Case I:} $x,y\notin \mathbb{N}$.  New positive differences include 
    $$0-x\ <\ y-(n-1)\ <\ y-(n-2)\ <\ \cdots\ <\ y-0\ <\ y-x.$$

\noindent \textbf{Case II:} $x,y\in\mathbb{N}$. Let $x = -j$ and $y=(n-1)+k$. Because $y-(n-1)>0-x$, $k>j$.
\begin{enumerate}
    \item Subcase II.1: $k>n-1$ and $j>n-1$. We have $y - [0,n-1] = [k,k+(n-1)]$ and $y-x = (n-1)+k+j$. Because $$(n-1)<j<k<k+1<\cdots<k+(n-1)<(n-1)+k+j,$$ there are at least $n+2$ new positive differences $\{j,(n-1)+k+j\}\cup [k,k+(n-1)]$. Hence, $I\cup\{x,y\}$ is not sum-dominant. 
    \item Subcase II.2: $k>n-1$ and $j\le n-1$. Then $0\le -j+(n-1) = x+(n-1)\le (n-1)$. So, $x+(n-1)$ is not a new sum and so, we have at most $2n+2$ new sums. As above, there are at least $n+1$ new positive differences $\{(n-1)+k+j\}\cup [k,k+(n-1)]$. Hence, $I\cup\{x,y\}$ is not sum-dominant. 
    \item Subcase II.3: $j<k\le n-1$. We will find all new sums from $x+[0,n-1]$ and $y+[0,n-1]$. We have
    \begin{align*}
        x + [0,n-1] &\ =\ [-j, n-1-j] \ =\ [-j,-1]\cup [0,n-1-j],\\
        y + [0,n-1]&\ =\ [n-1+k,2n-2+k]\ =\ [n-1+k,2n-2]\cup [2n-1,2n-2+k]. 
    \end{align*}
    New sums are from $[-j,-1]\cup [2n-1,2n-2+k]\cup\{2x,2y\}$. (We do not count $x+y$ because $0\le x+y = (n-1)+k-j\le 2n-2$.) So, there are at most $j+k+2$ new sums. We find a lower bound for the number of new differences by looking at 
    \begin{align*}
        y-[0,n-1]\ =\ [y-(n-1),y] \ =\ [k,(n-1)+k] \ =\ [k,(n-1)]\cup [n,n+(k-1)]. 
    \end{align*}
    The $k$ numbers in $[n,n+(k-1)]$ are new positive differences. Another new difference is $y-x$. So, the number of new differences is bounded below by $2(k+1)=2k+2$. This is bigger than $j+k+2$. Hence, $I\cup\{x,y\}$ is not sum-dominant.  
\end{enumerate}
As these four parts consider all possible cases, we have completed the proof. \end{proof}
Due to the fact that being sum-dominant is preserved under affine transformation, Theorem \ref{two} follows immediately from Lemma \ref{lemma1} and Lemma \ref{lemma2}. 

\begin{remark}
We can use Theorem \ref{two} to simplify the proof of Theorem \ref{notsum-dominant}. 
\begin{itemize}
    \item For $|A| = 4$, we write $A=\{a_1,a_2,a_3,a_4\}$. Because $a_1,a_2$ is an arithmetic progression, adding $a_3,a_4$ to the set of $\{a_1,a_2\}$ does not give a sum-dominant set.
    \item For $|A|=5$, if $A$ contains an arithmetic progression of length $3$, then we are done. We look at Section \ref{5not} and easily eliminate Subcase I.1 and Case II, thus shortening the proof. 
\end{itemize}
The trade off is that the proof of Theorem \ref{two} is much more computationally involved. 
\end{remark}
\section{Proof of Theorem \ref{three}}\label{setscontainseq}
\begin{proof}
We first prove item 1. Let $S' =\{s_1,s_2,\ldots,s_{k-1}\}$, where $s_1<s_2<\cdots<s_{k-1}$ be a finite subset of $A$. We show that $S = \{s_1,s_2,\ldots,s_{k-1},s_k\}$ with $s_k>s_{k-1}$ is not a sum-dominant set by induction. If $k\le 2r+n$, then $S$ is not sum-dominant by the second assumption of the theorem. If $k\ge 2r+n+1$, consider the number of new sums and differences obtained by adding $s_k$. The number of new sums is at most $k$. We prove that the number of new differences is at least $k+n+1$.

Since $k\ge 2r+n+1$, we have $k-\lfloor \frac{k+n+2}{2}\rfloor\ge r$. Let $t = \lfloor \frac{k+n+2}{2}\rfloor$. Then, $t\le k-r$, which implies $s_t\le s_{k-r}$. The largest difference between elements in $S'$ is $s_{k-1}-s_1$; we now show that we have added at least $t$ distinct differences greater than $s_{k-1}-s_1$. Denote $s_i = a_{g(i)}$, where $g: \mathbb{Z}^+\rightarrow \mathbb{Z}^+$ is an increasing function. We have
\begin{align*}
    s_k-s_t&\ \ge \ s_k-s_{k-r} \ =\ a_{g(k)} - a_{g(k-r)}\\
    &\ \ge \ a_{g(k)} - a_{g(k)-r}\\
    &\ >\ a_{g(k)-1} - a_1\\
    &\ \ge \ s_{k-1}-a_1 \ \ge\ s_{k-1}-s_1. 
\end{align*}
The third inequality is due to the first assumption on $\{a_n\}$. Since $s_k-s_t > s_{k-1}-s_1$, we know that 
$$s_{k}-s_t, \ldots, s_{k}-s_2, s_{k} - s_1$$
are $t$ differences greater than the greatest difference in $S'$. As we could subtract in the opposite order, the number of new differences obtained from adding $s_k$ to $S'$ is at least
$$2t \ =\ 2\bigg\lfloor \frac{k+n+2}{2}\bigg\rfloor\ \ge\ k+n+1.$$
We have seen that by adding $s_k$ to the set $S'$, the number of differences goes by at least $n+1$ more than the number of sums. Note that $S'$ must have at least $2r+n$ elements. 

Next, we prove item 2. To form $S^*$, we first add $b_1$ to the set, which gives at most $|S|+1$ new sums. When we add $b_2$ to $S^*\cup\{b_1\}$, we have at most $|S|+2$ new sums. Continue the process until we have added all $m$ numbers $b_1,b_2,\ldots,b_m$ to the set, the number of new sums is at most $\sum_{i=1}^m(|S|+i) = m|S|+\frac{m(m+1)}{2}$. Our original set $S$ has $|S-S|-|S+S| = \ell(n+1)$; therefore, if $m|S|+\frac{m(m+1)}{2}< \ell(n+1)$, then $|S^*-S^*|-|S^*+S^*| \ge |S-S|-(|S+S|+ m|S|+\frac{m(m+1)}{2})\ge 0$. Therefore, $S^*$ is not sum-dominant. 
\end{proof}

\section{Future work}\label{final}
We list some natural topics for future research. 

\begin{itemize}
    \item Is there a human-understandable proof that $6$ is not the cardinality of a sum-dominant set? 
    \item What is the smallest number of integers added to a set of numbers in an arithmetic progression to have a sum-dominant set?  
    \item Is the following conjecture true? 
    \begin{conjecture}
    The union of two arithmetic progressions is not a sum-dominant set. 
    \end{conjecture}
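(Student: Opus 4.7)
\medskip

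\noindent\textbf{Proof proposal.} The plan is to combine the symmetry of each individual arithmetic progression with a careful analysis of the cross-interactions between them. Since sum-dominance is preserved under affine transformation (as in the proof of Corollary \ref{arithnotMSTD}), I would normalize $A_1 = I_{n_1} = \{0,1,\ldots,n_1-1\}$ and write $A_2 = \{m, m+d, m+2d, \ldots, m+(n_2-1)d\}$ for some $m\in\mathbb{R}$ and $d>0$; the task becomes showing $|A-A|\ge |A+A|$ where $A = A_1\cup A_2$.

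A useful preliminary identity is $|A_1 + A_2| = |A_1 - A_2|$ for any two finite arithmetic progressions. It follows from the involution $(i,j)\mapsto (i, n_2-1-j)$ on the index pairs, which realizes $A_1 + A_2$ as an affine translate of $A_1 - A_2$. Combined with $|A_i + A_i|=|A_i-A_i|$ (each $A_i$ is symmetric and hence balanced by Lemma \ref{symnotMSTD}), this says that the internal and cross contributions are matched between sums and differences. The slack in favor of differences should come from the fact that $A-A$ contains both $A_1-A_2$ and $A_2-A_1 = -(A_1-A_2)$, effectively doubling the cross-difference contribution (up to overlap about $0$), while $A+A$ contains only the single cross-sum set $A_1+A_2$. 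The proof then reduces to a careful inclusion-exclusion showing that this doubling survives all overlaps.

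I would next split into cases on the common difference $d$. When $d\notin \mathbb{Q}$, irrationality prevents most collisions and a direct inclusion-exclusion yields an explicit non-negative slack (in the model case $m=0$ with $d_1=1$, one computes $|A-A|-|A+A| = (n_1-1)(n_2-1)$). When $d\in \mathbb{Q}$, rescale so that $d$ is a positive integer, and then subdivide by the relative position of $A_2$ with respect to $A_1$: $A_2$ lies entirely to one side of $A_1$; $A_2$ partially overlaps $A_1$; $A_2$ is interleaved with $A_1$ (possible when $d\ge 2$); or $A_2$ straddles $A_1$ on both sides. In each normal form, one tallies the sum collisions $i+(m+jd) = i' + (m+j'd)$ and the corresponding difference collisions, and verifies the inequality by direct counting.

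The main obstacle, in my view, is the rational case when $d\ge 2$ and the two progressions interleave non-trivially: coincidences such as $0+(m+2d) = 1+(m+2d-1)$ (when $2d-1\in A_1$) erode the apparent slack, and the bookkeeping proliferates rapidly. The proof of Theorem \ref{two}, which introduces only two extra elements, already required a four-part case analysis; extending to a progression of arbitrary length will likely require either a uniform charging argument (assigning to each sum collision a distinct pair of difference collisions, in the spirit of Observation \ref{losesum}) or a higher-level structural tool (such as Pl\"unnecke-Ruzsa bounds tailored to unions of arithmetic progressions) to avoid an explosion of subcases. Developing such a unifying argument is, I believe, the heart of the conjecture.
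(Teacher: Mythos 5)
The statement you are addressing is not a theorem of the paper but an explicitly open conjecture: the author states that the case analysis developed in the paper ``is unable to solve this general case,'' so there is no proof in the paper to compare yours against. Judged on its own terms, your submission is a research plan rather than a proof, and you say as much yourself. Your preliminary observations are correct and worth having: the normalization to $A_1=I_{n_1}$ is legitimate; the identity $|A_1+A_2|=|A_1-A_2|$ does follow from the symmetry of $A_2$ (indeed $A_1-A_2$ is a translate of $A_1+A_2$ by $-(2b+(n_2-1)d)$ where $b=\min A_2$); and your computation $|A-A|-|A+A|=(n_1-1)(n_2-1)$ in the irrational model case with $m=0$ checks out. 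These confirm the heuristic that the difference set gains from containing both $A_1-A_2$ and its negative while the sum set has only one cross term.

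The genuine gap is exactly where you place it, but it is worth naming why it is not merely bookkeeping. The ``doubling'' of the cross-difference contribution can degenerate completely: if $A_1-A_2$ happens to be symmetric about $0$ (which occurs whenever the union $A$ is close to symmetric), then $A_2-A_1=A_1-A_2$ and the slack from doubling vanishes, so the argument must instead locate compensating collisions on the sum side. Showing that the sum-side losses always at least match the difference-side losses is precisely the kind of pairing that Observation \ref{losesum} provides only in a $2$-to-$1$ ratio (each $k$ pairs of equal positive differences forces only $k/2$ pairs of equal sums), which runs in the wrong direction for your purposes. A charging argument assigning to each sum collision a distinct pair of difference collisions would need to be strictly stronger than Observation \ref{losesum}, and you have not indicated how to construct it; nor do Pl\"unnecke--Ruzsa-type bounds distinguish $|A+A|$ from $|A-A|$ finely enough to settle a question whose answer is known to be an inequality that fails by exactly one element for the $8$-element set $A_1$ of the introduction. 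Until the rational interleaved case is handled, the conjecture remains open, and your proposal should be read as a plausible program, not a proof.
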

    The case analysis in this paper is unable to solve this general case because the complexity grows quickly when more numbers are added to an arithmetic progression. The case that the minimum of one arithmetic progression is bigger than the maximum of the other is easy. If this conjecture is true, our Theorem \ref{two} follows immediately because the set of two numbers forms an arithmetic progression. 
\end{itemize}
\section{Acknowledgments}
First, I would like to thank professor Steven Miller at Williams College for introducing me to this interesting topic. Next, many thanks to the anonymous referee for valuable comments which helped to improve the article. Thanks to professor Kevin Beanland at Washington and Lee University for proofreading this paper. Finally, I would like to dedicate this paper to my parents and my brother for their support throughout my undergraduate years at Washington and Lee University.

\bigskip
\hrule
\bigskip

\noindent 2010 {\it Mathematics Subject Classification}:
Primary 11P99; Secondary 11K99.

\noindent \emph{Keywords: } sum-dominant set, MSTD set.

\bigskip
\hrule
\bigskip

\vspace*{+.1in}
\noindent
Received March 6 2019;
revised versions received May 17 2019; May 20 2019.
Published in {\it Journal of Integer Sequences}, May 21 2019.

\bigskip
\hrule
\bigskip

\end{document}